\newcommand{\RR}{\mathbb{R}}
\newtheorem{theo}{Theorem}
\newtheorem{rem}[theo]{Remark}
\newcommand{\beqn}{\begin{equation}}
\newcommand{\eeqn}{\end{equation}}
\newcommand{\bear}{\begin{eqnarray}}
\newcommand{\eear}{\end{eqnarray}}
\newcommand{\bean}{\begin{eqnarray*}}
\newcommand{\eean}{\end{eqnarray*}}
\date{}
\begin{document}

\title{On the linearized  system of equations for the condensate-normal fluid interaction at very low temperature.}
\maketitle
\begin{center}
\vskip -1cm 
{\large M. Escobedo}\\
\vskip 0.2cm 
{Departamento de Matem\'aticas, \\Universidad del
Pa{\'\i}s Vasco, (UPV/EHU)\\ Apartado 644, E--48080 Bilbao, Spain.\\
E-mail~: {\tt mtpesmam@lg.ehu.es}}
\end{center}

\begin{abstract}
The linearization around one of its equilibrium of a system  that describes the correlations between the superfluid component and the normal fluid part of a condensed Bose gas in the approximation of very low temperature and small condensate density, is studied. A simple and transparent argument gives a necessary and sufficient condition on the initial data  for the existence of global solutions  satisfying  the conservation of the total number of particles and energy.  Their convergence to a suitable stationary state is also shown and rates of convergence for the normal fluid and superfluid components are obtained.
\vskip 0.2cm 
\noindent
Keywords: BE condensate, excitations, global existence, rates of convergence.
\vskip 0.2cm 
\noindent
MSC[2008]: 45K05, 45A05, 45M05, 82C40, 82C05, 82C22.
\end{abstract}

\null \hskip 0.3cm Keywords: BE condensate, excitations, global existence, rates of convergence.

\null \hskip 0.3cm  MSC[2008]: 45K05, 45A05, 45M05, 82C40, 82C05, 82C22

\section{Introduction}\label{Introduction}
In a uniform condensed Bose gas at temperature below  the condensation temperature $T_c$,  the correlations between the superfluid component and the normal fluid part, corresponding to the excitations, may be described by the following equation
\begin{align}
&\frac {\partial n} {\partial t}(t, p)=Q_3\Big(n_c(t), n(t)\Big)(p),\quad t>0,\; p\in \RR^3, \label{PA}\\
&Q_3\Big(n_c, n\Big)(p)=\iint _{(\RR^3)^2}\!\!\big[R(p, p_1, p_2)\!-\!R(p_1, p, p_2)\!-\!R(p_2, p_1, p) \big]\,dp_1dp_2. \label{E1BCD}\\
&R(p, p_1, p_2)\,=|\mathcal M(p, p_1, p_2)|^2 \left[\delta ( \omega -\omega_1- \omega_2 )\,  \delta (p-p_1-p_2)\right]\times \nonumber \\
&\hskip 7cm  \times \left[ n_1n_2(1+n)-(1+n_1)(1+n_2)n \right],  \label{S1EA4BEJ}
\end{align}
(\cite{Eckern, Kirkpatrick,  Za}), where $n(t, p)$ represents the density of particles in the normal gas that at time $t>0$ have momentum $p$ and energy $\omega $.  The equation (\ref{PA}) is complemented with the equation for the fluctuation of the condensate density,
\begin{align}
\frac {d n_c(t)} {dt}=\int  _{ \RR^3 }Q_3\Big(n_c(t), n(t)\Big)(p)dp,\,\,t>0. \label{PANc}
\end{align}
We consider in this letter the approximation of the Bogoliubov dispersion law for very low temperature and  large number density of condensed atoms. For dimensionless variables, in units which minimize the number of prefactors,
\begin{align}
&\omega=\left(n_c(t) |p|^2+|p|^4\right)^{1/2}\approx \sqrt{n_c(t)}\,|p|\label{S1EM}\\
&|\mathcal M(p, p_1, p_2)|^2\approx\frac {|p| |p_1||p_2|} {n_c(t)}\label{S1EMB}.
\end{align} 
(cf. \cite{D, Eckern, Za, Kirkpatrick}). The resulting equation in its wave turbulence version has been studied in detail in \cite{D} (see also \cite{ZN}), where properties of the principal part of the collision integral operator are presented. The non linear system (\ref{PA}), (\ref{PANc}) has been considered for radial solutions in the mathematical literature in the limit (\ref{S1EM}), (\ref{S1EMB})  in \cite{A}, where global existence of solutions are obtained under some conditions on the initial data. We consider here, with a slightly different approach, the non radial close to equilibrium situation, and the  long time behavior of the system described by the linearized equations.

The equation (\ref{PA}) has a family of equilibria $n_\mu (\omega ) =(e^{\omega -\mu }-1)^{-1}$ for $\mu \le 0$. It follows that, for any constant $\kappa>0$, the pair $(n_\mu (\omega ), \kappa)$ with $\omega =\sqrt \kappa\,  |p|$ is an equilibrium of the system (\ref{PA}), (\ref{PANc}).  Without any loss of generality the value  $\kappa=1$ is taken in all the sequel. The  linearized equation (\ref{PA}) around the equilibrium $n_0$ was first studied in \cite{Bu}, for radially symmetric perturbations and  constant function $n_c(t)$. Non radial perturbations were considered in \cite{EPV, ET}, under the same assumption on $n_c(t)$.  The linearization of equation (\ref{PA}) is obtained using the change of dependent variable
$$
n(t)=n_0+n_0(1+n_0)\, F (t)
$$
in the collision integral (\ref{E1BCD}) and keeping only linear terms in $F$ in the resulting equation (cf. for example  \cite{EPV, {ET}}). Under the approximation (\ref{S1EM}) the collision  manifold in the momentum space reduces to those $p$, $p_1$ and $p_2$ that are collinear.  The resulting equation is then,
\begin{align}
&n_0(p )[1+n_0(p )]\frac {\partial F} {\partial t}(t, p)=\frac {1} {n_c(t)}\mathscr L(F) \label{E50}\\
&\mathscr L(F)= -\Gamma (|p|)\,n_0(|p|)(1+n_0(|p|))F (t, p)+\int  _{ \RR^3 } F(t, p')\,W(p, p')dp'  \label{E51}\\
&\Gamma (x)=\sinh x\int _0^\infty \!\!\!  \frac {y^2} {\sinh y}\left(\frac {|x-y|^2} {\sinh |x-y|}+\frac {(x+y)^2} {\sinh (x+y)} \right)dy  
\end{align}
and where  the explicit expression of the function $W$, given in \cite{EPV, ET}, will not be needed.  When $n_c(t)$ is a constant function 
an existence and uniqueness result of global solutions $F(t, p) $ to  (\ref{E50})was proved in \cite{ET}, for all  data  $F_0 \in L^2(d\mu)$, was proved in  \cite{ET} where $d\mu\equiv n_0(1+n_0)dp$. It was also shown that if $\int  _{ |p|<1 }\frac {|\Omega_0 (p)|^2 d\mu } {|p|}<\infty$,
\begin{align}
\label{S1ERatedecay}
||F(t)-F _{ \infty } ||_{L^2(d\mu )}
\le  \frac{C}{(1+t)^{1/2 }}||F_0-F _{ \infty }||_{L^2(d\mu )  },\,\,\,\forall t>0,
\end{align}
where, if $\left\{ Y _{\ell\, m  }\right\} _{ \ell, m }$ denotes the spherical harmonics on $\mathbb{S}^2$
\begin{align}
F_\infty (p)=\left(\sum _{ \ell=0 }^\infty \sum _{ m=-\ell }^\ell c_{ \ell\, m }Y _{ \ell\, m }\left(\frac {p} {|p|} \right)\right) |p|;\,\,\,c _{ \ell\, m }=\int  _{ \RR^3 }F_0(p)Y _{ \ell\, m }\left(\frac {p} {|p|} \right)d\mu.  \label{S1ETeta}
\end{align}
A different extreme regime of a uniform condensed Bose gas,  for temperatures below but close to the critical temperature $T_c$  and with small but non constant  number density of condensed atoms is described in the literature of physics (\cite{D, Eckern, Za, Kirkpatrick}. The corresponding linearized equation around an equilibria has started to be studied in detail in \cite{m}.
\section{The linearized system for non constant $n_c(t)$.}
We wish to describe in this letter what happens when $n_c(t)$ is not a constant, and then equation (\ref{E50})  must be supplemented by the linearization of (\ref{PANc}).  Let us then consider, for the functions $u(t, p)$ and $m_c(t)$  the system,
\begin{align}
&n_0(p )[1+n_0(p )]\frac {\partial u } {\partial t}(t, p)=\frac {1} {m_c(t)}\mathscr L(u(t))(p) \label{E50B}\\
&\frac {d m_c(t)} {dt}=-\frac {1} {m_c(t)}\int _{ \RR^3 }\mathscr L(u (t) )(p)dp. \label{E56}
\end{align}
The coupled system (\ref{E50B}), (\ref{E56}) is not directly addressed, as it is in \cite{A} for the non linear case. Instead, as a first step,  the term $m_c(t)^{-1}$ in the right hand side of (\ref{E50B}) is absorbed by means of a change of time variable:
\begin{equation}
\label{ETCh}
\tau =\int _0^t\frac {ds} {m_c(s)}
\end{equation}
The equation (\ref{E50B}) for the new function $v(\tau , p)=u(t, p)$ is then reduced to 
\begin{align}
&n_0(p )[1+n_0(p )]\frac {\partial v } {\partial \tau }(\tau , p)=\mathscr L(v (\tau )) \label{E50b}
\end{align}
to which the results of \cite{ET} may be applied. It is then proved, in a second step that the change of variables (\ref{ETCh}) may be inverted and that function $m_c$ and $u$ may be deduced to satisfy (\ref{E50B}), (\ref{E56}). Our results are the following,
\begin{theo}\label{theorem1}
Suppose that  $u _0\in L^2(d\mu )$ 
and let $v $ be the solution of equation (\ref{E50}) with $n_c(t)\equiv 1$ and initial data $u_0$.
Suppose that $m_c(0)>0$ is such that
\begin{align}
\label{S2Epc0}
m_c^2(0)>2\int  _{ \RR^3 }\Big(v  (\tau , p)-u_0(p)\Big)d\mu ,\,\,\forall \tau >0,
\end{align}
and let $u_\infty$ be defined as $F _{ \infty }$ in (\ref{S1ETeta}) with $F_0$ replaced by $u_0$.
Then, there exists a unique pair of functions $(u (t, p), m_c(t))$ with
\begin{align}
u \in L^\infty \left(0,\infty;  L^2(d\mu )\right)\cap\, 
 &C \left([0,\infty); L^2(d\mu )\right);\,\,\,\,u - u_\infty \in L^2\left(0, \infty; L^2(d\mu )\right),\label{S1E250}\\
& \frac {\partial u } {\partial t}\in L^2\left(0, \infty;  L^2\left(\frac {d\mu } {\Gamma (|p|)}\right)\right)\label{S1E252}\\
%\label{S1E252}\\
&m_c\in C([0, \infty)),\,\,m_c(t)>0,\,\,\forall t>0,
\end{align}
satisfying the equation  (\ref{E50B}) in $L^2\left(0, \infty;  L^2\left(\frac {d\mu } {\Gamma (|p|)}\right)\right)$ and equation (\ref{E56}) for all $t>0$, such that:
\begin{align}
\label{S1Einitial1}
&\lim _{ t\to 0 }\left( |m_c(t)-m_c(0)| + ||u (t)-u _0|| _{ L^2\left(\frac {d\mu } {\Gamma (|p|)}\right)}+
||u (t)-u _0|| _{ L^2(d\mu )}\right)=0.
\end{align}
This solution also satisfies the following conservation properties:
\begin{align}
\label{consmassM}
&p'_c(t)+\frac {d} {dt}\int  _{ \RR^3 }n_0(p)(1+n_0(p))u (t, p)dp=0,\,\,\,\forall t>0\\
\label{consenergyM}
&\frac {d} {dt}\int  _{ \RR^3 }n_0(p)(1+n_0(p))u (t, p)|p|dp=0,\,\,\,\forall t>0.
\end{align}
\end{theo}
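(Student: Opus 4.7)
The plan is to carry out rigorously the two-step reduction already sketched in the excerpt: solve the decoupled problem (\ref{E50b}) by invoking the existence theory of \cite{ET}, and then reconstruct $m_c$ and invert the time change (\ref{ETCh}). For $u_0\in L^2(d\mu)$, \cite{ET} supplies a unique solution $v\in L^\infty(0,\infty;L^2(d\mu))\cap C([0,\infty);L^2(d\mu))$ of (\ref{E50b}) with $v(0)=u_0$, having $\partial_\tau v\in L^2(0,\infty;L^2(d\mu/\Gamma))$ and converging to $u_\infty$ at the rate (\ref{S1ERatedecay}). The identity $\int_{\RR^3}|p|\,\mathscr{L}(v)\,dp\equiv 0$, inherited from the energy-conserving structure of the collision operator $Q_3$, gives $\int v(\tau)|p|\,d\mu\equiv\int u_0|p|\,d\mu$, which will eventually yield (\ref{consenergyM}).

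Motivated by integrating (\ref{E56}) in the $\tau$ variable, I then define
\[
\tilde m_c(\tau):=m_c(0)-\int_{\RR^3}\bigl(v(\tau,p)-u_0(p)\bigr)\,d\mu,\qquad \tau\ge 0,
\]
which by direct computation from (\ref{E50b}) satisfies $\frac{d\tilde m_c}{d\tau}=-\int_{\RR^3}\mathscr{L}(v(\tau))\,dp$. Condition (\ref{S2Epc0}) is used precisely to guarantee $\tilde m_c(\tau)>0$ for every $\tau\ge 0$ and, combined with the decay $v(\tau)\to u_\infty$ in $L^2(d\mu)$ (which makes $\int(v(\tau)-u_0)\,d\mu$ converge as $\tau\to\infty$), yields a uniform lower bound $\tilde m_c(\tau)\ge\delta>0$. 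Hence $t(\tau):=\int_0^\tau\tilde m_c(s)\,ds$ is a $C^1$-diffeomorphism of $[0,\infty)$ onto itself with inverse $\tau(t)$; the pair
\[
u(t,p):=v(\tau(t),p),\qquad m_c(t):=\tilde m_c(\tau(t))
\]
is the candidate solution, and the chain rule together with $d\tau/dt=1/m_c$ transforms (\ref{E50b}) into (\ref{E50B}) and the $\tau$-equation for $\tilde m_c$ into (\ref{E56}).

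The regularity statements in (\ref{S1E250})-(\ref{S1Einitial1}) and the continuity of $m_c$ transfer from those of $v$ and $\tilde m_c$ under the bi-Lipschitz change of variable; the particle balance (\ref{consmassM}) is built into the definition of $m_c$, while the energy identity (\ref{consenergyM}) is the $t$-transcription of $\int v|p|\,d\mu\equiv\int u_0|p|\,d\mu$. For uniqueness, any second admissible pair $(\hat u,\hat m_c)$ induces $\hat\tau(t):=\int_0^t ds/\hat m_c(s)$ and a function $\hat v(\hat\tau,p):=\hat u(t(\hat\tau),p)$ satisfying (\ref{E50b}) with datum $u_0$; the uniqueness part of \cite{ET} then forces $\hat v=v$, and inverting the time change shows $(\hat u,\hat m_c)=(u,m_c)$.

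The main obstacle is turning (\ref{S2Epc0}) into both strict positivity and a uniform lower bound on $\tilde m_c$, which is what forces $t(\tau)\to\infty$ as $\tau\to\infty$ and extends the coupled solution globally in $t$. This requires sharp control of $\sup_{\tau\ge 0}\int_{\RR^3}(v(\tau)-u_0)\,d\mu$ via (\ref{S1ERatedecay}); without such a bound, $\tilde m_c$ could vanish at some finite $\tau^*$ and the time change would fail to cover all of $[0,\infty)$.
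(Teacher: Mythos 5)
Your overall strategy coincides with the paper's: solve the autonomous equation (\ref{E50b}) via Theorem 1.1 of \cite{ET}, build $m_c$ explicitly from $v$, use positivity and a strictly positive limit at infinity to invert the time change (\ref{ETCh}) globally, and transport regularity, the conservation laws and uniqueness back through the change of variables. The one place where you genuinely depart from the paper is the explicit formula for the condensate component, and that is exactly where your argument has a gap. You set $\tilde m_c(\tau)=m_c(0)-\int_{\RR^3}(v(\tau)-u_0)\,d\mu$ and assert that hypothesis (\ref{S2Epc0}) ``is used precisely to guarantee $\tilde m_c(\tau)>0$''. It is not: (\ref{S2Epc0}) bounds $\int_{\RR^3}(v(\tau)-u_0)\,d\mu$ by $m_c^2(0)/2$, not by $m_c(0)$, and the two bounds are incomparable. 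For instance $m_c(0)=4$ together with $\int_{\RR^3}(v(\tau)-u_0)\,d\mu=5$ is compatible with (\ref{S2Epc0}) (since $16>10$) yet makes your $\tilde m_c(\tau)=-1$. The paper instead defines $q_c(\tau)=\bigl(m_c^2(0)-2\int_{\RR^3}(v(\tau)-u_0)\,d\mu\bigr)^{1/2}$, for which (\ref{S2Epc0}) is precisely the non-degeneracy condition, and for which the decay (\ref{S1ERatedecay}) gives $q_c^2(\tau)\to m_c^2(0)-2N_*>0$, hence non-integrability of $q_c$ and surjectivity of $\tau\mapsto t$. All the quantitative statements surrounding the theorem (Remark (i), the quantity $m_c^2(t)-m_c^2(0)+2N_*$ in Theorem \ref{theorem2}) are calibrated to this quadratic formula, so the hypothesis as stated cannot close your version of the argument.

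The discrepancy is not a mere slip: your $\tilde m_c$ is what one obtains by integrating (\ref{E56}) literally in the $\tau$ variable (since $dm_c/dt=q_c^{-1}\,dq_c/d\tau$ turns (\ref{E56}) into $dq_c/d\tau=-\int_{\RR^3}\mathscr L(v(\tau))\,dp$), whereas the paper's $q_c$ integrates the equation for $m_c^2$ while identifying $-\int_{\RR^3}\mathscr L(u(t))\,dp\,dt$ with $-m(\tau)\,d\tau$; the two prescriptions differ by the Jacobian $q_c$ of the time change, and only one of them can be simultaneously consistent with (\ref{E56}) and with (\ref{S2Epc0}). So you must either keep your formula and replace (\ref{S2Epc0}) by the condition $m_c(0)>\int_{\RR^3}(v(\tau)-u_0)\,d\mu$ for all $\tau>0$ (thereby proving a variant of the stated theorem), or adopt the paper's $q_c$ and check carefully which form of the $m_c$-equation it actually solves. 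As written, the positivity and uniform lower bound for $\tilde m_c$ --- the step you yourself single out as the main obstacle --- do not follow from the stated hypothesis, so the global inversion of the time change, and with it global existence, is not established.
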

\begin{proof}
Let us define, for all $\tau >0$,
\begin{align}
\label{S2Emtau}
m(\tau )&=\int  _{ \RR^3 }\mathscr L(v(\tau , p))dp\\
\label{S2Eqctau}
q_c(\tau )&=\Bigg( m_c(0)^2-2\int _0^\tau m(\sigma )d\sigma \Bigg)^{1/2}.
\end{align}
By (\ref{S2Emtau}) and the equation (\ref{E50b}),
\begin{align}
\label{S2EL1}
\int _0^\tau m(\sigma )d\sigma&=\int _0^\tau \frac {d} {d\tau } \int  _{ \RR^3 }n_0(1+n_0)v(\sigma )dpd\sigma
=\int  _{ \RR^3 }n_0(1+n_0)(v(\tau  , p)-u_0(p))dp
\end{align}
and it follows from condition (\ref{S2Epc0}) that $q_c(\tau )$ is well defined for all $\tau >0$. We may then define
\begin{align}
\label{S2Et}
t=\int _0^\tau q_c(\sigma )d\sigma. 
\end{align}
Since $q_c(\tau )> 0$ for all $\tau >0$, the right hand side of (\ref{S2Et}) is an increasing function of $\tau $. Moreover, if we denote
$$
N_*=\int  _{ \RR^3 }n_0(1+n_0)(u_\infty (p)-u_0(p))dp,
$$
by the convergence property (\ref{S1ERatedecay}) applied to $v$,
\begin{align}
&\left|q_c(\tau )^2-m_c(0)^2+2N_*\right|\le 2\int  _{ \RR^3 }|v(\tau , p)-u_\infty(p)|d\mu  \nonumber\\
&\le 2||v (\tau )-u_\infty ||_{L^2(d\mu )  } ||n_0(1+n_0)||_1^{1/2}
\le \frac{C||u _0-u_\infty||_{L^2(d\mu )  }}{(1+\tau )^{1/2 }} \label{Epx}
\end{align}
for some  numerical  constant $C>0$. Therefore,
\begin{align}
\label{Epx0}
\lim _{ \tau \to \infty }q_c^2(\tau )=m_c^2(0)-2N_*>0,
\end{align}
using (\ref{S2Epc0}) again and it  follows that the function $q_c$ is not integrable on $(0, \infty)$. Then, given any $t>0$ there exists a unique $\tau >0$ satisfying (\ref{S2Et}). We define then,
\begin{align}
\label{Pex}
u(t)= v(\tau ),\,\,\,m_c(t)=q_c(\tau ).
\end{align}
By (\ref{S2Et}) and Theorem 1.1 in \cite{ET}, the equation
\begin{align*}
n_0(1+n_0)\frac {\partial u} {\partial t}=n_0(1+n_0)\frac {\partial v} {\partial \tau }\frac {\partial \tau } {\partial t}=\mathscr L(v(\tau ))\frac {1} {q_c(\tau )}
=\frac {1} {m_c(t)}\mathscr L(u(t ))
\end{align*}
is satisfied in $L^2\left(0, \infty;  L^2\left(\frac {d\mu } {\Gamma (|p|)}\right)\right)$. On the other hand, since by (\ref{S2Eqctau})
\begin{align*}
\frac {dm_c(t)} {dt}&=\frac {dq_c(\tau )} {d\tau }\frac {d\tau } {dt }=-\frac {2 m(\tau )} {2q_c(\tau )}\frac {d\tau } {dt}\\
&=-\frac { m(\tau )} {m_c(t )}\frac {d\tau } {dt}=-\frac {1} {m_c(t)}\frac {d} {dt}\int  _{ \RR^3 }\mathscr L(u(t))dp,\,\,\,\forall t>0,
\end{align*}
equation (\ref{consenergyM}) is satisfied for all $t>0$. Properties (\ref{S1E250})-(\ref{S1Einitial1}) and (\ref{consenergyM}) are  consequences of properties
(15)-(19)  of Theorem 1.1 in \cite{ET} applied to the function $v(\tau )$ and, for Properties  (\ref{S1E252}) and (\ref{consenergyM}), that 
$\partial  _t=q_c(\tau )^{-1}\partial _\tau $ with $q_c\in C(0, \infty)$, $q_c(t)>0$ for all $t>0$. On the other hand, by Theorem 1.1 in \cite{ET}
\begin{align*}
\frac {d} {dt}\int  _{ \RR^3 }n_0(1+n_0)u(t, p)dp&=\frac {1} {m_c(t )}\frac {d} {d\tau }\int  _{ \RR^3 }n_0(1+n_0)v(\tau , p)dp=\frac {1} {m_c(t )}\int  _{ \RR^3 }n_0(1+n_0)\frac {v(\tau , p)} {\partial \tau}dp\\
&=\frac {1} {m_c(t )}\int  _{ \RR^3 }\mathscr L(v(\tau )(p)dp
=\frac {1} {m_c(t )}\int  _{ \RR^3 }\mathscr L(u(t )(p)dp
\end{align*}
and by (\ref{consenergyM}) the conservation (\ref{consmassM}) is satisfied.

If $(\tilde u,\tilde m_c)$ is another solution satisfying (\ref{S1E250})-(\ref{S1Einitial1}), the function $\tilde v(\tau , p)=\tilde u(t, p)$ for $\tau $ given by (\ref{ETCh}) would be a solution to (\ref{E50b}) satisfying (\ref{S1E250})-(\ref{S1E252}) and (\ref{S1Einitial1}). It then follows by Theorem 1.1 in \cite{ET} that $\tilde v(\tau )=v(\tau )$. Then,, since  $\tilde u(t, p)=\tilde v(\tau , p)$ for almost every  $p\in \RR^3$,
\begin{align*}
\frac {d \tilde m_c(t)} {dt}=-\frac {1} {\tilde m_c(t)}\int _{ \RR^3 }\mathscr L(\tilde u (t) )(p)dp=
-\frac {1} {\tilde m_c(t)}\int _{ \RR^3 }\mathscr L(\tilde v (\tau ) )(p)dp=-\frac {1} {\tilde m_c(t)}\int _{ \RR^3 }\mathscr L( v (\tau ) )(p)dp.
\end{align*}
Moreover $v(t, p)=u(t , p)$ for almost every  $p\in \RR^3$ and then
\begin{align}
\label{SEunidf}
\frac {d \tilde m_c(t)} {dt}=-\frac {1} {\tilde m_c(t)}\int _{ \RR^3 }\mathscr L( u (t ) )(p)dp.
\end{align}
By the Lipschitz property of right hand side of the equation  (\ref{SEunidf}) we deduce $\tilde m_c(t)=m_c(t)$ for all $t>0$ and this shows the uniqueness of the pair $(u, m_c)$ satisfying (\ref{E50B}), (\ref{E56}) and (\ref{S1E250})-(\ref{S1Einitial1}).
\end{proof}
\begin{theo}
\label{theorem2}
Let $u_0$ and $m_c(0)$ be as in Theorem \ref{theorem1}. 
If $u _0$ satisfies also:
\begin{align}
\label{S1Econdition}
\int  _{ |p|<1 }|u (p)|^2\, |p|^{-1}d\mu <\infty
\end{align}
then, there exists a constant $C=C(u_0)$ such that for all $t>0$,
\begin{align}
\label{S1ERatedecay2}
||u (t )-u_\infty ||_{L^2(d\mu ) }+\left|m_c^2(t) -m_c^2(0)+2N_*\right|\le 
\frac {C||u _0-u_\infty||_{L^2(d\mu )}} {\left(1+\frac {t} {(m_c^2(0)-2N_*)} \right)^{1/2}}.
\end{align}
\end{theo}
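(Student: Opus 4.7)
The plan is to transfer the $(1+\tau)^{-1/2}$-decay available for the auxiliary function $v(\tau)$ back to $u(t)$ and $m_c(t)$ through the time change $t=\int_0^\tau q_c(\sigma)\,d\sigma$ built in the proof of Theorem \ref{theorem1}. Under the extra hypothesis (\ref{S1Econdition}), the sharp convergence estimate (\ref{S1ERatedecay}) of \cite{ET} applied to $v$ (the solution of (\ref{E50b}) with datum $u_0$) gives
\begin{equation*}
\|v(\tau)-u_\infty\|_{L^2(d\mu)}\le C(1+\tau)^{-1/2}\|u_0-u_\infty\|_{L^2(d\mu)},
\end{equation*}
and the Cauchy--Schwarz computation already carried out in (\ref{Epx}) propagates exactly the same rate to $|q_c^2(\tau)-(m_c^2(0)-2N_*)|$. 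Since $u(t)=v(\tau(t))$ and $m_c^2(t)=q_c^2(\tau(t))$ by (\ref{Pex}), both quantities on the left of (\ref{S1ERatedecay2}) are thereby bounded by $C(1+\tau(t))^{-1/2}\|u_0-u_\infty\|_{L^2(d\mu)}$, and the only remaining task is to express $\tau(t)$ in terms of $t$.

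The second step is therefore to invert $\tau\mapsto t$ quantitatively. Since $dt/d\tau=q_c(\tau)>0$, a lower bound on $\tau(t)$ in terms of $t$ is equivalent to a uniform upper bound on $q_c$. The estimate (\ref{Epx}) already yields $\tau_0=\tau_0(u_0,m_c(0))$ such that $q_c(\sigma)^2\le 2(m_c^2(0)-2N_*)$ for $\sigma\ge\tau_0$, while on the compact interval $[0,\tau_0]$ the continuous function $q_c$ is automatically bounded by a constant depending only on $u_0$ and $m_c(0)$. Splitting $t=\int_0^\tau q_c(\sigma)\,d\sigma$ at $\tau_0$ then gives, for $\tau\ge\tau_0$, an inequality of the form $t\le C_1+\sqrt{2(m_c^2(0)-2N_*)}\,\tau$, which after absorbing the numerical constants produces a lower bound of the form $\tau(t)\ge c\,t$ and, when combined with the $\tau$-estimate of Step 1, the announced bound (\ref{S1ERatedecay2}).

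The main irritation I expect is purely book-keeping rather than a conceptual obstacle. The natural denominator coming out of the time change is $\sqrt{m_c^2(0)-2N_*}$, whereas (\ref{S1ERatedecay2}) is written with $m_c^2(0)-2N_*$; the discrepancy costs at most a factor of $(m_c^2(0)-2N_*)^{\pm 1/4}$ and is absorbed into the constant $C(u_0)$ (which is permitted to depend on $m_c(0)$ and $N_*$). One must, however, separate the small-$t$ regime, where the constant $1$ dominates the denominator and the bound is trivial, from the large-$t$ regime, where the lower bound on $\tau$ is really used. Beyond this, the proof is a direct consequence of (\ref{S1ERatedecay}) and of the computation (\ref{Epx}) already performed in the proof of Theorem \ref{theorem1}; no new analytic ingredient is needed.
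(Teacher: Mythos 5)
Your proposal is correct and follows essentially the same route as the paper: both transfer the $(1+\tau)^{-1/2}$ decay of $v$ and of $q_c^2$ from (\ref{Epx}) to $u(t)$ and $m_c(t)$ via (\ref{Pex}), and both invert the time change by using the convergence of $m_c(t)$ (equivalently $q_c(\tau)$) to the positive limit $(m_c^2(0)-2N_*)^{1/2}$ to get a linear lower bound $\tau(t)\gtrsim t/(m_c^2(0)-2N_*)^{1/2}$ for large $t$. The only cosmetic difference is that you bound $t=\int_0^\tau q_c\,d\sigma$ from above in the $\tau$ variable while the paper bounds $\tau(t)=\int_0^t ds/m_c(s)$ from below in the $t$ variable; these are the same estimate, and your remark about absorbing the mismatch between $\sqrt{m_c^2(0)-2N_*}$ and $m_c^2(0)-2N_*$ into $C(u_0)$ is consistent with what the paper does.
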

\begin{proof}
By (\ref{Epx}) and (\ref{Pex}),
\begin{align}
&\left|m_c(t )^2-m_c(0)^2+2N_*\right|\le \frac{C||u _0-u_\infty||_{L^2(d\mu )}}{(1+\tau(t) )^{1/2 }} \label{Epx2}\\
&\hbox{where}\,\,\,\tau (t)=\int _0^t\frac {ds} {m_c(s)}, \label{Epx3}
\end{align}
and by (\ref{Epx0}) and (\ref{Pex}),
$$
\lim _{ t\to \infty }m_c(t)=(m_c^2(0)-2N_*)^{1/2}.
$$
It follows that there exists $t_*>0$ such that,
\begin{equation*}
2 m_c(s)\ge (m_c^2(0)-2N_*)^{1/2},\,\,\forall s>t_*
\end{equation*}
and
\begin{align}
\label{Etauinf}
\tau (t)\ge \int  _{ t_* }^t\frac {ds} {m_c(s)}>\frac {\sqrt 2 (t-t_*)} {(m_c^2(0)-2N_*)^{1/2}},\,\,\forall t>t_*.
\end{align}
Then, for some constant $C>0$,
\begin{align}
\left|m_c^2(t) -m_c^2(0)+2N_*\right|\le \frac {C||u _0-u_\infty||_{L^2(d\mu )  }} {\left(1+\frac {t} {(m_c^2(0)-2N_*)} \right)^{1/2}},\,\,\,\forall t>0.
\end{align}
On the other hand, again  by (\ref{S1ERatedecay}) applied to $v$, and  (\ref{Etauinf}),
\begin{align*}
||u (t )-u_\infty ||_{L^2(d\mu ) }&=||v (\tau )-u_\infty ||_{L^2(d\mu ) }\le  \frac{C||u _0-u_\infty||_{L^2(d\mu ) }}{(1+\tau(t) )^{1/2 }}\le 
\frac{C||u _0-u_\infty||_{L^2(d\mu)   }}{(1+t )^{1/2 }}
\end{align*}
and (\ref{S1ERatedecay2}) follows. \end{proof}
\begin{rem}  (i) The non local condition (\ref{S2Epc0}) is necessary in order to have a global solution $(u, m_c)$ of (\ref{E50B}), (\ref{E56}) satisfying (\ref{S1E250})-(\ref{S1Einitial1}). If such a solution exists indeed,  the function $m_c$ must satisfy  $m_c^2(t)=m_c^2(0)-2\int  _{ \RR^3 }(v  (\tau , p)-u_0(p))d\mu $ for all $t>0$ where it is defined, and so (\ref{S2Epc0}) must be fulfilled.  

(ii) It would be interesting to know if there exists initial data $u_0$ for which as $t\to \infty$, $m_c(t)\to 0$  or $\int  _{ \RR^3 }u_\infty(p)d\mu =0$.
\end{rem}

\noindent
\textbf{Acknowledgments.}
The research of the author is supported by grants PID2020-112617GB-C21 of MINECO and IT1247-19 of the Basque Government.

 \end{document}